\newcommand{\N}{{\mathbb N}}
\newcommand{\Z}{{\mathbb Z}}
\newcommand{\Q}{{\mathbb Q}}
\newcommand{\C}{{\mathbb C}}
\newcommand{\bQ}{\overline{\mathbb Q}}
\newcommand{\cA}{{\mathcal A}}
\newcommand{\cC}{{\mathcal C}}
\newcommand{\cE}{{\mathcal E}}
\newcommand{\cP}{{\mathcal P}}
\newcommand{\cL}{{\mathcal L}}
\newcommand{\trdeg}{\text{\rm trdeg}}
\newtheorem{thm}{Theorem}
\newtheorem{lem}{Lemma}
\newtheorem{cor}{Corollary}
\newtheorem{conj}{Conjecture}
\newtheorem{rmk}{Remark}
\newtheorem{defn}{Definition}
\newcommand{\thmref}[1]{Theorem~\ref{#1}}
\newcommand{\lemref}[1]{Lemma~\ref{#1}}
\newcommand{\corref}[1]{Corollary~\ref{#1}}
\newcommand{\conjref}[1]{Conjecture~\ref{#1}}
\begin{document}

\title{An abelian analogue of Schanuel's conjecture and applications}

\author{Patrice Philippon, Biswajyoti Saha and Ekata Saha}

\address{Patrice Philippon \\ \newline
\'Equipe de Th\'eorie des Nombres, Institut de Math\'ematiques
de Jussieu-Paris Rive Gauche, UMR CNRS 7586, Paris, France.}
\email{patrice.philippon@upmc.fr}

\address{Biswajyoti Saha\\ \newline
School of Mathematics and Statistics, University of Hyderabad,
Prof. C.R. Rao Road, Gachibowli, Hyderabad 500046, India.}
\email{biswa.imsc@gmail.com, biswa@uohyd.ac.in}

\address{Ekata Saha\\ \newline
Statistics and Mathematics Unit, Indian Statistical Institute,
203 Barrackpore Trunk Road, Kolkata 700 108, India.}
\email{ekata.imsc@gmail.com, ekata@math.tifr.res.in}

\subjclass[2010]{11J81, 11J89, 11J95}

\keywords{abelian analogue of Schanuel's conjecture, linear disjointness, algebraic independence}

\date{September 4, 2019}

\begin{abstract}
In this article we study an abelian analogue of Schanuel's conjecture. This
conjecture falls in the realm of the generalised period conjecture of Y. Andr\'e.
As shown by C. Bertolin, the generalised period conjecture includes
Schanuel's conjecture as a special case. Extending methods of Bertolin,
it can be shown that the abelian analogue of Schanuel's conjecture we consider,
also follows from Andr\'e's conjecture.  C. Cheng et al. showed that the classical Schanuel's
conjecture implies the algebraic independence of the values  of the iterated exponential
function and the values of the iterated logarithmic function, answering a question of
M. Waldschmidt. We then investigate a similar question in the setup of abelian varieties.
\end{abstract}

\maketitle

\section{Introduction}

S.~Schanuel proposed the following conjecture while attending a course given by
S.~Lang at Columbia University in the 1960's. Most of the known results in the
transcendental number theory about the values of the exponential function are
encompassed by Schanuel's conjecture, and they can be derived as its consequence.

\begin{conj}[Schanuel]\label{Schanuel}
Let $x_1,\ldots,x_n \in \C$ be such that they are linearly independent
over $\Q$. Then the transcendence degree of the field
$$
\Q(x_1,\ldots,x_n,e^{x_1},\ldots,e^{x_n})
$$
over $\Q$ is at least $n$.
\end{conj}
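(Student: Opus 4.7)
The statement to be proved is Schanuel's conjecture itself, which remains one of the central open problems in transcendental number theory; it has resisted attack since the 1960s, so any genuine plan here must be understood as a plan to attack a well-known unsolved problem rather than a routine verification. A natural first step is to isolate what is already known and ask whether those cases can be interpolated. When $x_1,\ldots,x_n$ are algebraic and linearly independent over $\Q$, the conclusion is the Lindemann--Weierstrass theorem, proved by the classical transcendence machinery of Hermite: construct an auxiliary polynomial vanishing to high order, use Galois symmetrization to make the value rational, then contradict Liouville. When the $x_i$ are logarithms of algebraic numbers, the conclusion is, after a small reduction, Baker's theorem on linear forms in logarithms. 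One would try to set up a framework containing both endpoints as degenerate cases.

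The generic line of attack is the Gelfond--Schneider--Baker--Philippon transcendence method. Assuming towards a contradiction that $\trdeg_\Q \Q(x_1,\ldots,x_n,e^{x_1},\ldots,e^{x_n}) < n$, one would choose large parameters, apply Siegel's lemma to construct an auxiliary function
$$
F(z_1,\ldots,z_n) = \sum_{\lambda,\mu} a_{\lambda,\mu}\, z^{\lambda} e^{\mu\cdot z},
$$
with integer coefficients $a_{\lambda,\mu}$, small in size and vanishing to high order along the subgroup of $\C^n\times(\C^\times)^n$ generated by the point $(x_1,\ldots,x_n,e^{x_1},\ldots,e^{x_n})$. One then extrapolates the vanishing to larger multiples of this point via a Schwarz-type analytic estimate, extracts a small nonzero algebraic number, and invokes Liouville's inequality together with a zero estimate on the commutative algebraic group $\mathbb{G}_a^n\times\mathbb{G}_m^n$ (in the style of Philippon's zero estimate) to reach a contradiction.

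The decisive obstacle, and the reason the conjecture remains open, is that the hypotheses assume nothing arithmetic about the $x_i$: they are arbitrary complex numbers linearly independent over $\Q$. The transcendence method needs height control on the points at which one extrapolates, because the Liouville lower bound scales with the denominator and degree of the algebraic numbers involved; here no such control is available, and the auxiliary-function approach stalls at the Liouville step. Breaking through would require either a new transcendence technique that operates without height hypotheses, or, as pursued in the paper under review, a reduction to a structured motivic setting (Andr\'e's generalised period conjecture) where the hypotheses can be phrased in terms of objects with intrinsic arithmetic meaning. Consistent with this, I would not attempt a direct proof of \conjref{Schanuel} here but would instead, as the authors do, deduce it, and its abelian analogue, from the period-conjecture framework.
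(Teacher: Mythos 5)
You have correctly identified that \conjref{Schanuel} is a conjecture, not a theorem: the paper offers no proof of it, and none exists. Your survey of the known special cases (Lindemann--Weierstrass for algebraic $x_i$, the $n=1$ case via Hermite--Lindemann) and of the obstruction — the lack of any height control in the Liouville/zero-estimate step when the $x_i$ are arbitrary complex numbers — accurately explains why the auxiliary-function method stalls, and your conclusion that one should not attempt a direct proof but instead situate the statement inside Andr\'e's generalised period conjecture is precisely the stance the paper itself takes. No gap to report; there is simply nothing to prove here, and you said so.
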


For example, C.~Cheng et al. have shown in \cite{CDHHKMMMW} how to derive from
\conjref{Schanuel}, the linear disjointness of the two fields constructed over
$\Q$ by adjoining repeatedly the algebraic closure of the field generated by the values
of the exponential and logarithm functions respectively.

The only known cases of \conjref{Schanuel} are $n=1$ and
$x_1,\ldots,x_n \in \bQ$ for general $n$. The $n=1$ case is a consequence
of the Hermite-Lindemann theorem, whereas the latter case is known as
the Lindemann-Weierstrass theorem. But these two special cases were
known much before the inception of this conjecture. 

Schanuel's conjecture has been generalised to various other
contexts. The elliptic analogue of Schanuel's conjecture is well-studied.
Let $\Lambda$ be a lattice in $\C$ and $\wp$ denote the associated
Weierstrass $\wp$-function,
$$
\wp(z)=\wp(\Lambda;z):= \frac{1}{z^2}+ \sum_{\omega \in \Lambda \setminus \{0\}}
\left( \frac{1}{(z-\omega)^2} - \frac{1}{\omega^2} \right)
$$
for $z \in \C \setminus \Lambda$. The Weierstrass $\wp$-function is an
elliptic function with double poles at the points of $\Lambda$ and
holomorphic in $\C \setminus \Lambda$. Moreover, for all
$z \in \C \setminus \Lambda$, we have the relation
$$
\wp'(z)^2=4 \wp(z)^3 - 60 G_4(\Lambda) \wp(z) - 140 G_6(\Lambda).
$$
Here for $k \ge 2$,
$G_{2k}(\Lambda):= \sum_{\omega \in \Lambda \setminus \{0\}} \omega^{-2k}$
is the associated Eisenstein series of weight $2k$.
Let $g_2=60 G_4(\Lambda)$ and $g_3=140 G_6(\Lambda)$.
Then the modular invariant $j(\Lambda)$ is defined by
$$
j(\Lambda):=1728 \frac{g_2^3}{g_2^3-27g_3^2}.
$$
and the associated Weierstrass $\zeta$-function is defined by
$$
\zeta(z)=\zeta(\Lambda;z):= \frac{1}{z}+ \sum_{\omega \in \Lambda \setminus \{0\}}
\left( \frac{1}{z-\omega} + \frac{1}{\omega}+ \frac{z}{\omega^2} \right),
$$
where the series above converges absolutely and uniformly in any compact subset of
$\C \setminus \Lambda$. Thus it is holomorphic  in $\C \setminus \Lambda$
and $\zeta'(z)=-\wp(z)$. If $\omega_1,\omega_2$ denote the fundamental periods of $\Lambda$, then
the quasi-periods $\eta_1,\eta_2$ are defined by $\eta_i:=\zeta(z+\omega_i)-\zeta(z)$
for $i=1,2$. With these notations, the elliptic Schanuel conjecture reads as follows (see \cite{CB}) :

\begin{conj}[elliptic Schanuel]\label{eS}
Let $\Lambda$ be a lattice and $\wp,\zeta$ denote the associated
Weierstrass functions. Let $K$ be the field of endomorphisms of $\Lambda$
and $x_1,\ldots,x_n \in \C \setminus \Lambda$ such that they are
linearly independent over $K$. Then
$$
\trdeg_\Q \Q(g_2,g_3,\omega_1,\omega_2,\eta_1,\eta_2, x_1,\ldots,x_n,
\wp(x_1),\ldots,\wp(x_n),\zeta(x_1),\ldots,\zeta(x_n)) \ge 2n+ \frac{4}{[K:\Q]}.
$$
\end{conj}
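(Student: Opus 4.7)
The plan is to derive the conjecture from Andr\'e's generalised period conjecture, along the motivic lines alluded to in the introduction. First I would encode the data of the statement as the periods of a single $1$-motive: let $E=\C/\Lambda$ be the elliptic curve with invariants $g_2,g_3$, and let $\widetilde E$ denote its universal vectorial extension, an extension of $E$ by $\mathbb G_a$ whose de Rham realisation is two-dimensional and carries a basis dual to $dz$ and to a differential of the second kind on $E$ whose integrals over $\omega_1,\omega_2$ produce the quasi-periods $\eta_1,\eta_2$. For each $i$, I would lift $x_i\bmod\Lambda\in E(\C)$ to a point $\tilde x_i\in\widetilde E(\C)$ and form the $1$-motive
$$
M \;=\; \bigl[\Z^n \xrightarrow{\,u\,} \widetilde E{}^n\bigr], \qquad u(e_i):=\tilde x_i.
$$
A direct computation in Betti--de Rham coordinates should then show that the periods of $M$ generate over $\bQ$ the field whose transcendence degree the conjecture controls.

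Next, I would compute the dimension of the motivic Galois group $G_{\mathrm{mot}}(M)$. The pure piece coming from $E$ contributes $\dim\mathrm{MT}(E)=4/[K:\Q]$: it equals $4$ when $E$ has no complex multiplication (so $\mathrm{MT}(E)=\mathrm{GL}_2$) and $2$ in the CM case, where $\mathrm{MT}(E)$ is a two-dimensional torus inside $\mathrm{GL}_2$. The $K$-linear independence of $x_1,\ldots,x_n$ should force the extension class of $M$ to be sufficiently generic that the unipotent radical attains its maximal dimension $2n$; summing, $\dim G_{\mathrm{mot}}(M)=2n+4/[K:\Q]$. Finally, I would invoke Andr\'e's period conjecture, which equates $\trdeg_\Q$ of the periods of a motive with the dimension of its motivic Galois group. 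Applied to $M$, this yields exactly the required lower bound.

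The hardest part is this last step: Andr\'e's conjecture is itself a major open problem, arguably of comparable depth to elliptic Schanuel. Even the case $n=0$---demanding transcendence degree $4$ for $g_2,g_3,\omega_1,\omega_2,\eta_1,\eta_2$ when $E$ is non-CM---is not currently known. The strongest unconditional tools available (Chudnovsky's theorem in the CM case, Nesterenko's theorem on the values of $E_2,E_4,E_6$, and W\"ustholz's analytic subgroup theorem) fall short of giving algebraic independence for arbitrary $n$. An unconditional attack would presumably require a substantial new input, for instance an elliptic Ax--Schanuel statement strong enough to handle the quasi-period coordinates simultaneously with the exponential ones.
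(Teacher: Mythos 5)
Conjecture 2 is a conjecture, not a theorem: the paper does not prove it, but only states it following Bertolin \cite{CB}, and you rightly observe that no unconditional proof exists. Your conditional derivation from Andr\'e's generalised period conjecture is precisely the frame that Bertolin adopts, and that this paper itself sketches (following a suggestion of Bertrand) for the abelian Conjecture 3, so the overall strategy is the right one. A technical slip, however: a Deligne $1$-motive has the form $[L\to G]$ with $G$ a semi-abelian variety, so $[\Z^n\to\widetilde E{}^n]$ is not a $1$-motive; the correct object is $M=[\Z^n\to E^n]$ (or $[\Z^n\to E]$), $e_i\mapsto x_i\bmod\Lambda$, and the quasi-period coordinates $\eta_j$, $\zeta(x_i)$ appear automatically in the de Rham realisation of $M$ through the universal vectorial extension; one does not lift the $1$-motive itself to $\widetilde E$. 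The identification $\dim U(M)=2\dim H$ under $K$-linear independence is not something to wave at with the phrase ``should force the extension class to be sufficiently generic''; it is exactly the content of \cite[Prop.~1]{YA1}, which the paper invokes for the abelian case and which you need to cite here. Your closing caveat is accurate: since Andr\'e's conjecture is itself open, this route relocates the difficulty rather than resolving it, which is why the statement remains a conjecture.
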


Often the weaker statement
\begin{equation}\label{weak-elliptic}
\trdeg_\Q \Q(g_2,g_3, x_1,\ldots,x_n, \wp(x_1),\ldots,\wp(x_n)) \ge n
\end{equation}
is also considered for application of the elliptic Schanuel conjecture.
Here also the $n=1$ case is known and it can be deduced as a consequence of a
more general theorem of T. Schneider and S. Lang about transcendental values
of meromorphic functions. The analogue of Lindemann-Weierstrass theorem when
the Weierstrass $\wp$-function with algebraic invariants $g_2,g_3$
has complex multiplication, was proved independently by the first author
\cite{PP} and G. W\"ustholtz \cite{GW}.

We also have A. Grothendieck's period conjecture for an abelian variety $A$, defined
over $\bQ$. It states that the transcendence degree of the period matrix of $A$ is
the same as the dimension of the associated Mumford-Tate group. From the work
of P. Deligne \cite[Cor. 6.4, p.76]{PD} one gets that this dimension is at least an upper bound
for the transcendence degree of the period matrix. Y. Andr\'e \cite[Chap. 23]{YA} suggested
a more general conjecture which is now known as the generalised period conjecture.
It predicts a lower bound for the transcendence degree of the field
generated by the periods of a mixed motive defined over an algebraically closed
sub-field $K$ of $\C$, which is not necessarily algebraic over $\Q$. 
In fact, Grothendieck's period conjecture can be seen as
a special case of Andr\'e's conjecture, using Deligne's work. Further,
C. Bertolin \cite{CB} has shown that this generalised period conjecture,
applied to 1-motives defined over a subfield of $\C$, not necessarily algebraic, includes
Schanuel's conjecture as a special case.

We now consider the following weaker version of Andr\'e's generalised period
conjecture. Let $A(\C)$ be an abelian variety of dimension $g$ defined over $\bQ$
and $\exp_A: \C^g \to A(\C)$ denote the exponential map, 
which is periodic with respect to a lattice $\Lambda_A$.
Let $\omega_1,\dots,\omega_g$ be a basis of the holomorphic
differential 1-forms and $\eta_1,\dots,\eta_g$ be a basis of the meromorphic
differential 1-forms with residue $0$ on $A$. Next let
$\gamma_1,\dots,\gamma_{2g}$ be a basis of the homology of $A$.
So the matrix of period $\tilde\Lambda_A$ is the $2g\times 2g$ matrix with entries
$\int_{\gamma_j}\omega_i$ and $\int_{\gamma_j}\eta_i$, $i=1,\dots,g$, $j=1,\dots,2g$,
while the matrix of the lattice $\Lambda_A$ is the $g\times 2g$ matrix with entries $\int_{\gamma_j}\omega_i$.

Let $u\in{\C}^g$ and $y = \exp_A(u)$. The relevant 1-motive here is $M = [{\Z} \to A]$, $1 \mapsto y$,
which is defined over ${\bQ}(\exp_A(u))$. Let $MT(M)$ denote its Mumford-Tate group.
The periods of the 1-motive $M$ include the periods of $A$ and the components of $u$
i.e. the numbers $\int_0^u\omega_i$, and also the integrals $\int_0^u\eta_i$. Let
$\zeta_A(u)$ denote the vector with components $\int_0^u\eta_i$, $i=1,\dots,g$.

\begin{conj}[weak abelian Schanuel]\label{aS}
With the notations as above,
let $\bQ(\tilde\Lambda_A)$ denote the field generated by the periods
and quasi-periods over $\bQ$. Let $u\in{\C}^g$ and $H$ be the smallest
algebraic subgroup of $A$ containing the point $\exp_A(u)$. Then
\begin{equation}\label{qp-2}
\trdeg_{\bQ(\tilde\Lambda_A)} \bQ(\tilde\Lambda_A,\exp_{A}(u),u,\zeta_A(u)) \ge 2\dim(H).
\end{equation}
\end{conj}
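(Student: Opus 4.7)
The plan is to derive the inequality from Andr\'e's generalised period conjecture applied to the $1$-motive $M=[\Z\to A]$, $1\mapsto y:=\exp_A(u)$, introduced in the paragraph preceding the statement; this extends from the $\mathbb{G}_m^n$-setting the strategy by which Bertolin~\cite{CB} derived the classical Schanuel conjecture. Recall that $M$ is defined over $K:=\bQ(y)\subset\C$, which is in general a transcendental extension of $\bQ$, but Andr\'e's conjecture is formulated over arbitrary subfields of $\C$.

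\emph{Step 1 --- periods of $M$.} First I would identify the periods of $M$. Its de Rham realisation is an extension of $\bQ$ by $H^1_{\mathrm{dR}}(A)$ and its Betti realisation is the corresponding extension of $\bQ$ by $H_1(A,\bQ)$, both extension classes being governed by $y$. Pairing the new Betti cycle --- a path from $0$ to $u$ in $\mathrm{Lie}(A)$ --- against the basis $\omega_i,\eta_i$ yields precisely the coordinates of $u$ and of $\zeta_A(u)$, while pairing $\gamma_1,\dots,\gamma_{2g}$ against the same differentials recovers the entries of $\tilde\Lambda_A$. Hence the field of periods of $M$ over $K$ equals $\bQ(\tilde\Lambda_A,\exp_A(u),u,\zeta_A(u))$.

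\emph{Step 2 --- Andr\'e plus Deligne.} Andr\'e's conjecture for $M$ then predicts $\trdeg_K K(\tilde\Lambda_A,u,\zeta_A(u)) \ge \dim MT(M)$, while Deligne's theorem applied to $A$ gives $\trdeg_\bQ\bQ(\tilde\Lambda_A)\le\dim MT(A)$. A routine manipulation of transcendence degrees (using $y\in K$ and the additivity $\trdeg_\bQ L = \trdeg_\bQ K + \trdeg_K L$) yields
$$\trdeg_{\bQ(\tilde\Lambda_A)}\bQ(\tilde\Lambda_A,\exp_A(u),u,\zeta_A(u)) \ \ge\ \dim MT(M)-\dim MT(A),$$
so the conjecture reduces to establishing $\dim MT(M)-\dim MT(A)\ge 2\dim(H)$.

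\emph{Step 3 --- the Mumford-Tate calculation.} This is where I expect the main difficulty. The projection of $1$-motives $M\to[0\to A]$ induces a short exact sequence
$$1\ \to\ U\ \to\ MT(M)\ \to\ MT(A)\ \to\ 1$$
with $U$ a unipotent group, so the task becomes $\dim U \ge 2\dim(H)$. I would argue that the minimality of $H$ as an algebraic subgroup of $A$ containing $y$ forces $U$ to contain the unipotent radical of $MT([\Z\to H])$, and that this latter radical has dimension exactly $2\dim(H)$. The factor $2$ --- absent in Bertolin's toric case, where the corresponding radical has dimension $n$ (the number of linearly independent $x_i$) --- reflects that $H^1_{\mathrm{dR}}(H)$ is $2\dim(H)$-dimensional, being spanned by forms of the first and second kind; this is precisely why both $u$ and $\zeta_A(u)$ enter the statement. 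Concretely, I would identify $U$ with the Lie algebra of the vector subgroup $\omega_H^{\vee}$ in the universal vectorial extension $H^{\natural}\to H$, and verify via a Tannakian argument that the image of $y$ under the natural map $H\to H^{\natural}/\omega_H^{\vee}$ is generic enough to force $U$ to be all of $\mathrm{Lie}(\omega_H^{\vee}\times\omega_H^{\vee})$. Executing this Tannakian computation, adapting Bertolin's template to the abelian setting, is the technical heart of the proof.
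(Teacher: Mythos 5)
Your proposal follows the same route as the paper's indication of proof (attributed there to D.~Bertrand): reduce \eqref{qp-2} to the bound $\dim MT(M)-\dim MT(A)\ge 2\dim(H)$ by applying Andr\'e's generalised period conjecture to the $1$-motive $M=[\Z\to A]$, $1\mapsto\exp_A(u)$. Your Steps~1 and~2 match the paper's; using Deligne's unconditional inequality $\trdeg_\bQ\bQ(\tilde\Lambda_A)\le\dim MT(A)$ instead of the full Grothendieck conjecture is even a mild economy, since only that direction is needed in the transcendence-degree bookkeeping.

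The gap is in Step~3. You correctly isolate $\dim U(M)=2\dim(H)$ as the crux, but you leave the Tannakian computation unexecuted and flag it as ``the technical heart''. In the paper this step is not done from scratch: it is exactly the content of \cite[Proposition~1]{YA1}, which identifies $U(M)$ with $H^1_{Betti}(H^\circ)$, so that $\dim U(M)=2\dim H$ follows immediately. Your sketch of that step is also internally inconsistent: you first propose to identify $U$ with $\mathrm{Lie}(\omega_H^\vee)$ inside the universal vectorial extension $H^\natural\to H$ (which has dimension $\dim H$), then assert $U$ should be all of $\mathrm{Lie}(\omega_H^\vee\times\omega_H^\vee)$ (dimension $2\dim H$), and the auxiliary map $H\to H^\natural/\omega_H^\vee$ you invoke is just the identity. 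What actually carries the factor $2$ is the rank of the Betti realisation of $H^\circ$, which is precisely what Andr\'e's Proposition delivers. So the proposal correctly locates where the argument must bite, but does not close it; the missing ingredient is a citation of Andr\'e's result rather than a fresh Tannakian computation.
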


In a discussion, D. Bertrand told us that G. Vall\'ee \cite{GV} deduces
\conjref{aS} from Andr\'e's conjecture based on \cite[Proposition 1]{YA1}
and extending the methods of \cite{CB}. With due consent, we
reproduce an indication of this argument here.

The generalised period conjecture of Andr\'e for $M$ implies 
$$
\trdeg_{\bQ}\bQ(\tilde\Lambda_A,\exp_{A}(u),u,\zeta_A(u)) \ge \dim MT(M).
$$
Since $A$ is defined over $\bQ$, Grothendieck's conjecture (which is a particular case
of Andr\'e's conjecture) gives $\trdeg_{\bQ}\bQ(\tilde\Lambda_A) = \dim(MT(A))$. Hence
\begin{equation}\label{tr-deg}
\trdeg_{\bQ(\tilde\Lambda_A)} \bQ(\tilde\Lambda_A,\exp_{A}(u),u,\zeta_A(u))
\ge \dim MT(M) - \dim MT(A).
\end{equation}

If $U(M)$ denotes the unipotent radical of $MT(M)$, then $MT(M)/U(M)$ is the
(reductive) group $MT(A)$. Hence the right hand side of \eqref{tr-deg}
equals $\dim(U(M))$. Furthermore, by \cite[Proposition 1]{YA1}, $U(M)$ is known
to be equal to $H^1_{Betti}(H^\circ)$, where $H^\circ$ is the connected
component of $H$ containing the trivial element. Thus 
$\dim(U(M))=2\dim(H)$ and we therefore get \eqref{qp-2}, i.e.
\begin{equation*}
\trdeg_{\bQ(\tilde\Lambda_A)} \bQ(\tilde\Lambda_A,\exp_{A}(u),u,\zeta_A(u)) \ge 2\dim(H).
\end{equation*}
Thus \conjref{aS} follows from the generalised period conjecture of Andr\'e.

\begin{rmk}\label{rmk-aS-qp}\rm
Note that
$$
\trdeg_{\bQ(\tilde\Lambda_A)} \bQ(\tilde\Lambda_A,\exp_{A}(u),u,\zeta_A(u))
\le \trdeg_{\bQ(\tilde\Lambda_A)} \bQ(\tilde\Lambda_A,\exp_{A}(u),u) + \dim(H).
$$
Hence from \eqref{qp-2} we can deduce, 
\begin{equation}\label{qp-1}
\trdeg_{\bQ(\tilde\Lambda_A)} \bQ(\tilde\Lambda_A,u, \exp_A(u)) \ge \dim(H),
\end{equation}
and therefore
\begin{equation}\label{qp-0}
\trdeg_{\bQ(\Lambda_A)} \bQ(\Lambda_A, u, \exp_A(u)) \ge \dim(H).
\end{equation}
So we get \eqref{qp-1} and \eqref{qp-0} as consequence of \eqref{qp-2}.
\end{rmk}

\begin{rmk}\rm
\conjref{aS} can further be considered for abelian varieties defined over a
subfield of $\C$, not necessarily algebraic. G. Vall\'ee \cite{GV} has formulated the
relevant statement from Andr\'e's generalised period conjecture, and his statement
includes \conjref{aS} as a special case.
\end{rmk}

We therefore have supporting evidence for considering \conjref{aS}. 
With this conjecture in place we want to extend the results of C.~Cheng
et al.~\cite{CDHHKMMMW} to this setting, that is prove the linear disjointness of
the two fields defined below. For the sake of completeness we recall the definition
of linear disjointness (see \cite[Chap. VIII, \S3]{SL}). 

\begin{defn}
Let $F$ be a field and $F_1,F_2$ two of its field extensions contained
in a larger field $G$. Then $F_1$ is said to be linearly disjoint ({\it resp.} free) from 
$F_2$ over $F$ if any finite $F$-linearly ({\it resp.} algebraically) independent subset
of $F_1$ is also $F_2$-linearly ({\it resp.} algebraically) independent (as a subset of $G$).
\end{defn}

Though the above definition is asymmetric, it can be shown that the property
of being linearly disjoint ({\it resp.} free) is actually symmetric for $F_1$ and $F_2$.
It is easy to see that if $F_1$ and $F_2$ are linearly disjoint over $F$ then
$F_1 \cap F_2 = F$. Also if $F_1$ and $F_2$ are linearly disjoint over $F$ then
one can deduce that $F_1$ and $F_2$ are free over $F$
(see \cite[Chap. VIII, Prop. 3.2]{SL}). The converse is true in special
cases (see \cite[Chap. VIII, Theorem 4.12]{SL} and \lemref{l3} below).
The property of being free is also called as $F_1$ and $F_2$
being algebraically independent over $F$.

We now setup the relevant notations for our theorem. Recall, $A$ is an abelian
variety over $\bQ$ and $\tilde\Lambda_A$ denote the matrix of periods.
We consider two recursively defined sets $\cE,\cL$. Let us define
\footnote[1]{As previously for the field $\Q$, we denote with a bar $\overline{K}$
 the algebraic closure in $\C$ of a subfield $K\subset\C$.}
$$
\cE =\bigcup_{n \ge 0} \cE_n \ \ \text{and} \ \ \cL =\bigcup_{n \ge 0} \cL_n
$$
where $\cE_0=\bQ$, $\cL_0 = \bQ$ and for $n \ge 1$,
$$
\cE_n = \overline{\cE_{n-1}(\{\text{components of }\exp_A(u) : u \in \cE_{n-1}^g\})}
$$
and
$$
\cL_n = \overline{\cL_{n-1}(\{\text{components of }u, \zeta_A(u) : \exp_A(u) \in A(\cL_{n-1})\})}.
$$
Often, as in \cite{GV}, for a point $P$ of $A(\C)$, one uses $\log_A P$
({\it resp.}  $\tilde\log_A P$) to denote the point
$u=\exp_A^{-1}(P)$ ({\it resp.} $(u,\zeta_A(u))$). By convention we take
$\exp_A(\tilde\log_A P)=\exp_A(\log_A P)=P$. Given a suitable set $S$,
we denote $\exp_A(S)$ ({\it resp.} $\log_A(S)$, $\tilde\log_A(S)$) the set of
elements $\exp_A(u)$ for $u\in S$ ({\it resp.} $\log_A(P)$, $\tilde\log_A(P)$
for $P\in S$). Then, by induction, one can see that for $n \ge 1$,
$$
\cE_n = \overline{\Q(\text{components of } \exp_A(\cE_{n-1}^g))}
\ \ \text{and} \ \
\cL_n = \overline{\Q(\tilde \Lambda_A,\text{components of } \tilde\log_A(A(\cL_{n-1})))}.
$$
Below we state our main theorem about $\cE$ and $\cL$,
where we take the field $G$ in Definition 1 to be $\C$.

\begin{thm}\label{lin-dis}
If \conjref{aS} is true, then $\cE$ and $\cL$ are linearly disjoint over $\bQ$.
Since $\cE,\cL$ are algebraically closed, it is equivalent to say that they are
algebraically independent over $\bQ$.
\end{thm}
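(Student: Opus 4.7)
The plan is to reduce \thmref{lin-dis} to the algebraic independence of $\cE$ and $\cL$ over $\bQ$ and then to argue by a joint induction on the depths of the two towers, pulling any putative algebraic relation back to an application of \conjref{aS} that produces a contradictory inequality. Since $\cE$ and $\cL$ are algebraically closed, the second assertion of the theorem (formalised as \lemref{l3} below) lets us replace linear disjointness by algebraic independence. It therefore suffices to establish, for every $n,m\ge 0$, that $\cE_n$ and $\cL_m$ are algebraically independent over $\bQ$.

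I would prove this by induction on $n+m$. The case $n=0$ is trivial since $\cE_0=\bQ$, and the case $m=0$ is handled by a separate sub-induction on $n$ that relies only on the weaker inequality \eqref{qp-1} of \rmkref{rmk-aS-qp}. For the general step, suppose towards contradiction that some tuple $a_1,\dots,a_r\in\cE_n$ is algebraically independent over $\bQ$ but becomes algebraically dependent over $\cL_m$, witnessed by elements $b_1,\dots,b_s\in\cL_m$. By the recursive definitions, one can write each $a_i$ as an algebraic function of the coordinates of $\exp_A(u^{(i)})$ for some $u^{(i)}\in\cE_{n-1}^g$, and each $b_j$ as an algebraic function of the coordinates of $v^{(j)},\zeta_A(v^{(j)})$ for some $v^{(j)}\in\C^g$ with $\exp_A(v^{(j)})\in A(\cL_{m-1})$. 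Assembling the point $w=(u^{(1)},\dots,u^{(k)},v^{(1)},\dots,v^{(\ell)})\in\C^{g(k+\ell)}$ and applying \conjref{aS} to $A^{k+\ell}$ at $w$ gives
$$
\trdeg_{\bQ(\tilde\Lambda_{A^{k+\ell}})} \bQ\bigl(\tilde\Lambda_{A^{k+\ell}},\exp_{A^{k+\ell}}(w),w,\zeta_{A^{k+\ell}}(w)\bigr)\ \ge\ 2\dim(H),
$$
where $H\subseteq A^{k+\ell}$ is the smallest algebraic subgroup containing $\exp_{A^{k+\ell}}(w)$.

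The inductive hypotheses for the pairs $(n-1,m)$ and $(n,m-1)$ tell us that the $\cE_{n-1}$-data among the coordinates of $w$ is algebraically independent of the $\cL_{m-1}$-data (and of the $b_j$) over $\bQ(\tilde\Lambda_A)$. Combined with the assumed algebraic dependence of the $a_i$ on the $b_j$, and after selecting the $u^{(i)},v^{(j)}$ to be minimal modulo the relations provided by the induction, this should force an upper bound on the displayed transcendence degree that is strictly smaller than $2\dim(H)$ — with $\dim(H)$ chosen precisely to reflect the new information contributed at levels $n$ and $m$ — yielding the required contradiction.

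The main obstacle is two-fold. First, the quasi-logarithms $\zeta_A(u^{(i)})$ appear in the Schanuel bound but are \emph{a priori} in neither $\cE$ nor $\cL$, so one must ensure that these unlocated summands do not inflate the upper bound enough to defeat the strict inequality. Second, computing $\dim(H)$ requires tracking the $\mathrm{End}(A)\otimes\Q$-linear relations among the $u^{(i)}$ and $v^{(j)}$ modulo $\Lambda_A$, and the minimality of the chosen generators together with the lower-level inductive hypotheses must be used to exclude parasitic relations. The bulk of the technical work lies in executing this dimension count cleanly so that the upper bound produced by the assumed algebraic dependence is strictly smaller than the lower bound provided by \conjref{aS}.
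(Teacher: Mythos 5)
Your overall strategy mirrors the paper's: invoke \lemref{l3} to replace linear disjointness by algebraic independence, reduce to showing each $\cE_m$, $\cL_n$ is algebraically independent from the other over $\bQ$, pick a minimal bad pair, pull the witnessing data back via \lemref{l1}/\lemref{l2} to finitely many ``logarithms'', assemble them into a single point of a power of $A$, and play the \conjref{aS} lower bound against the upper bound coming from the ambient algebraic subgroup. The reduction and bookkeeping are essentially the same.

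However, the proposal stops precisely where the real work begins. You write that ``the bulk of the technical work lies in executing this dimension count cleanly'' and that minimality ``must be used to exclude parasitic relations'' — but that dimension count \emph{is} the proof. The whole argument hinges on the equality $\dim B = \dim A_1 + \dim A_2$, where $A_1, A_2, B$ are the smallest algebraic subgroups through $\exp(\mathbf{u_1})$, $\exp(\mathbf{u_2})$, $\exp(\mathbf{u_1},\mathbf{u_2})$; your ``strictly smaller than $2\dim(H)$'' contradiction presupposes it, since an algebraic dependence could in principle shrink $H$ together with the transcendence degree and leave no contradiction at all. The paper establishes the equality by taking any defining relation \eqref{def-TB} for $TB$, noting its common value $u = \sum_j \delta_{1j} u_{1j} = \sum_j \delta_{2j} u_{2j}$ lies in $\cE_{m-1}^g \cap \cL_n^g$ while $\exp_A(u) \in A(\cE_m) \cap A(\cL_{n-1})$, using the minimality of $(m,n)$ to collapse both intersections to $\bQ$-points, and then applying \conjref{aS} \emph{once more} (to this single $u$) to conclude that $\exp_A(u)$ is torsion — from which the relation splits into separate relations on $TA_1$ and $TA_2$. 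There is also a nontrivial further reduction \eqref{def-TB-2} to handle $A$ not simple via isogeny decomposition. None of these ingredients, and in particular the extra application of \conjref{aS} to kill cross-relations, appear in your proposal. Finally, your first ``obstacle'' (the presence of $\zeta_A$) has a short resolution you don't give: one simply builds $K_1$ as well as $K_2$ to contain the quasi-logarithmic data, so that both the \conjref{aS} lower bound and the $\dim$-based upper bound carry the same factor of $2$; this is why the paper sets $K_1 = \overline{\Q(\mathbf{u_1}, \exp_{A^{n_1}}(\mathbf{u_1}), \zeta_{A^{n_1}}(\mathbf{u_1}))}$ even though $\cE$ itself contains no $\zeta$-data.
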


Proof of this theorem follows the structure of the proof of the main theorem of
\cite{CDHHKMMMW}, but a good part of it differs towards the end of our proof.
It will be interesting to consider similar problem for semi-abelian varieties
to encompass the cases treated here and in \cite{CDHHKMMMW}.


\begin{rmk}\rm
In \thmref{lin-dis}, one can consider abelian varieties defined over a
subfield of $\C$, not necessarily algebraic, as considered by G. Vall\'ee \cite{GV}.
However, the statement is not true as it stands. In \S4.1, we exhibit an elliptic curve
such that $\cE_1 \cap \cL_1 \supsetneq \cE_0 \cap \cL_0$ for the natural
candidate of $\cE_0$ and $\cL_0$. The difficulty in this case is
coming from the fact that we no more have the equality in Grothendieck's period
conjecture (also see \cite[Chap. 23.4]{YA}).
\end{rmk}

\begin{rmk}
\rm We get an immediate application
of \thmref{lin-dis} for elliptic curves over $\bQ$. See \S4 for more details.
\end{rmk}

\section{Intermediate lemmas}

In this section we deduce some intermediate results  to prove \thmref{lin-dis}.

\begin{lem}\label{l3}
Let $K_1,K_2$ be two sub-fields of $\C$, which are algebraically closed over $K_1 \cap K_2$.
Then they are algebraically independent over $K_1 \cap K_2$ if and only if they are linearly
disjoint over $K_1 \cap K_2$.
\end{lem}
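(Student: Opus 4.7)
The implication that linear disjointness implies algebraic independence is a general fact already recalled from \cite[Chap. VIII, Prop. 3.2]{SL}; my plan is to prove the converse. Assume $K_1, K_2$ are algebraically independent over $F := K_1 \cap K_2$. First I would observe that $F$ is itself algebraically closed in $\C$: any element of $\C$ algebraic over $F$ lies in both $K_i$ (each containing $F$ and being algebraically closed) and hence in $F$. In particular $F$ is algebraically closed in $K_1$.

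To show that the multiplication map $K_1 \otimes_F K_2 \to K_1K_2 \subset \C$ is injective, it suffices, by passage to direct limits, to check that $K_1 \otimes_F L$ is a domain for every finitely generated subextension $L/F$ of $K_2/F$. Write $L = F(T)(\alpha)$, with $T \subset K_2$ a finite algebraically independent subset over $F$ and $\alpha$ having minimal polynomial $m(X) \in F(T)[X]$. By freeness, $T$ remains algebraically independent over $K_1$, so $K_1 \otimes_F F(T)$ injects into $K_1(T)$; flatness of $L$ over $F(T)$ then reduces the problem to showing that $m(X)$ stays irreducible over $K_1(T)$. By the standard argument that the coefficients of any nontrivial factor are symmetric functions of roots, hence algebraic over $F(T)$, this in turn will follow once one proves that $F(T)$ is algebraically closed in $K_1(T)$.

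This last step is the main obstacle, and I would attack it by Zariski specialization. Given $\mu \in K_1(T)$ algebraic over $F(T)$ with minimal polynomial $m_\mu \in F(T)[X]$, write $\mu = P/Q$ with $P, Q \in K_1[T]$ coprime, fix an $F$-basis $\{e_i\}$ of $K_1$, and expand $P = \sum_i p_i e_i$, $Q = \sum_i q_i e_i$ with $p_i, q_i \in F[T]$. For every $\tau$ in a non-empty Zariski open subset of $F^{|T|}$, the specialization $\mu(\tau) = P(\tau)/Q(\tau) \in K_1$ is annihilated by the specialized polynomial $m_\mu(\tau)(X) \in F[X]$, so $\mu(\tau)$ is algebraic over $F$ and lies in $F$ since $F$ is algebraically closed in $K_1$. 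Comparing $\{e_i\}$-coefficients in the identity $P(\tau) = \mu(\tau) Q(\tau)$ gives $p_i(\tau) q_j(\tau) = p_j(\tau) q_i(\tau)$ for all $i, j$ on a Zariski-dense subset of $F^{|T|}$; as $F$ is infinite, these identities hold in $F[T]$, so all ratios $p_i/q_i$ coincide in $F(T)$, forcing $\mu = P/Q \in F(T)$.
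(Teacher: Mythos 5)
Your proof is correct in substance but takes an entirely different route from the paper's, which simply invokes Lang's Algebra (Chap.~VIII, Proposition 3.2 for the easy direction and Theorem 4.12 for the converse, the latter being the statement that a regular extension is linearly disjoint from any extension it is free from; here $F=K_1\cap K_2$ is itself algebraically closed, so $K_1/F$ is automatically regular). You instead rebuild the converse from scratch: reduce to finitely generated subextensions $L=F(T)(\alpha)$, observe that freeness keeps $T$ algebraically independent over $K_1$, and then prove by a Zariski specialization argument that $F(T)$ is relatively algebraically closed in $K_1(T)$, which yields irreducibility of the minimal polynomial of $\alpha$ over $K_1(T)$. The specialization step — evaluating at generic $F$-points, using that $F$ is algebraically closed to force $\mu(\tau)\in F$, and then passing from the dense set of specializations to polynomial identities $p_iq_j=p_jq_i$ in $F[T]$ — is carried out correctly. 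What this buys you is a hands-on, self-contained proof not relying on the theory of regular extensions. One small imprecision to flag: the sentence claiming it ``suffices to check that $K_1\otimes_F L$ is a domain'' is not literally accurate, since a tensor product can be a domain while the multiplication map to $\C$ still has a nonzero prime kernel; what is needed, and what your argument actually establishes, is that $K_1\otimes_F L$ embeds into the field $K_1(T)[X]/(m)$ \emph{compatibly} with the maps to $\C$ (sending $X\mapsto\alpha$), so that injectivity into $\C$ follows from injectivity of a field homomorphism. Rephrasing the reduction to ask for injectivity of $K_1\otimes_F L\to\C$ (rather than the domain property alone) would make the logic airtight; the mathematics underneath is sound.
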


\begin{proof}
It follows immediately from Theorem 4.12 and Proposition 3.2 of \cite[Chap. VIII]{SL}.
\end{proof}

From now on in this section, $n$ denotes an integer $\ge 1$.

\begin{lem}\label{l1}
Let $A_n$ be a finite subset of $\cE_n$. Then there exists
a finite subset $\cA$ of $\cE_{n-1}$ such that
$\cA \cup A_n$ is algebraic over $\bQ(\text{components of }\exp_A(\cA^g))$.
\end{lem}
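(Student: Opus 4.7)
The plan is to proceed by induction on $n\ge 1$, using the alternative description $\cE_n=\overline{\Q(\text{components of }\exp_A(\cE_{n-1}^g))}$ recorded in the text. Each element of $\cE_n$ is algebraic over the field $\Q(\text{components of }\exp_A(\cE_{n-1}^g))$, hence is already algebraic over the subfield generated by the components of only \emph{finitely many} $\exp_A(u)$'s with $u\in\cE_{n-1}^g$.

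For the base case $n=1$, since $A_1\subset\cE_1$ is finite I can choose finitely many vectors $u_1,\dots,u_m\in\cE_0^g=\bQ^g$ so that $A_1$ is algebraic over $\Q(\text{components of }\exp_A(u_1),\dots,\exp_A(u_m))$. Take $\cA\subset\bQ=\cE_0$ to be the finite set of components of the $u_i$'s. Each $u_i$ lies in $\cA^g$, so $A_1$ is algebraic over $\bQ(\text{components of }\exp_A(\cA^g))$; and the inclusion $\cA\subset\bQ$ makes the algebraicity of $\cA$ over that field automatic.

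For the inductive step $n\ge 2$, assume the lemma holds for $n-1$. Given finite $A_n\subset\cE_n$, pick as above finitely many $u_1,\dots,u_m\in\cE_{n-1}^g$ such that $A_n$ is algebraic over $\Q(\text{components of }\exp_A(u_1),\dots,\exp_A(u_m))$, and let $B\subset\cE_{n-1}$ be the finite set of all components of the $u_i$'s. Apply the induction hypothesis to $B$: there exists a finite $\cA'\subset\cE_{n-2}$ such that $\cA'\cup B$ is algebraic over $\bQ(\text{components of }\exp_A((\cA')^g))$. Set
$$
\cA \;:=\; \cA'\cup B\;\subset\;\cE_{n-1}.
$$
To verify the conclusion, note that each $u_i\in B^g\subset\cA^g$, so $A_n$ is algebraic over $\bQ(\text{components of }\exp_A(\cA^g))$; moreover $\cA'\subset\cA$ gives $\exp_A((\cA')^g)\subset\exp_A(\cA^g)$, so by the inductive hypothesis $\cA=\cA'\cup B$ is algebraic over the smaller field $\bQ(\text{components of }\exp_A((\cA')^g))$ and hence over the larger field $\bQ(\text{components of }\exp_A(\cA^g))$. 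Thus $\cA\cup A_n$ is algebraic over $\bQ(\text{components of }\exp_A(\cA^g))$.

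The only subtle point, and the reason the naive choice $\cA=B$ does \emph{not} suffice, is that one must simultaneously guarantee algebraicity of the auxiliary set $\cA$ itself over the adjoined field. This is precisely handled by enlarging $B$ via the set $\cA'$ produced by the inductive hypothesis; the remaining verifications are routine from the monotonicity $\cA'\subset\cA$.
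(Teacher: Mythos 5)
Your proof is correct and proceeds by essentially the same mechanism as the paper's: descending level by level through the tower $\cE_{n-1},\cE_{n-2},\dots,\cE_0$ and collecting all the intermediate finite sets into $\cA$ so that $\cA$ itself remains algebraic over the adjoined field. The only cosmetic difference is that you package the descent as a formal induction on $n$, whereas the paper carries out the iteration directly, producing sets $A_{n-1},\dots,A_0$ with $A_{i+1}$ algebraic over $\bQ(\text{components of }\exp_A(A_i^g))$ and then taking $\cA=\bigcup_{0\le i\le n-1}A_i$.
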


\begin{proof}
Since $A_n \subset \cE_n$, for each $x \in A_n$, there exists a finite subset $C_x$ of
$\cE_{n-1}$ such that $x$ is algebraic over $\bQ(\text{components of }\exp_A(C_x^g))$.
Let $A_{n-1}:= \cup_{x \in A_n} C_x \subset \cE_{n-1}$. Then $A_{n-1}$ is a finite
subset of $\cE_{n-1}$.

We repeat the process to get sets $\{A_i\}_{0 \le i \le n-2}$
such that for each $i$, $A_i$ is a finite subset of $\cE_i$ and $A_{i+1}$ is algebraic
over $\bQ(\text{components of }\exp_A(A_i^g))$. We take
$\cA:= \cup_{0 \le i \le n-1} A_i \subset \cE_{n-1}$. Then
$\cA \cup A_n$ is algebraic over $\bQ(\text{components of }\exp_A(\cA^g))$.
\end{proof}

\begin{lem}\label{l2}
Let $C$ be a finite subset of $\cL_n$. Then there exists
a finite set $\cC \subset \cL_{n}^{g}$ with $\exp_A(\cC) \subset A(\cL_{n-1})$
such that the set $\{\text{components of }\exp_A(\cC)\} \ \cup \ C$ is algebraic over
the field $\bQ(\tilde\Lambda_A,\text{components of }\cC)$.
\end{lem}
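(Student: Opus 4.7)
The plan is to mirror the proof of \lemref{l1}, but applied to the tower defining $\cL$ and allowing for the fact that a descent step from $\cL_k$ to $\cL_{k-1}$ introduces new elements $(u,\zeta_A(u))$ whose components sit in $\cL_k^{2g}$ while $\exp_A(u)\in A(\cL_{k-1})$. I would carry out a downward induction on the level.

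First, setting $D_n := C$, I would invoke the defining recursion
$$
\cL_n = \overline{\cL_{n-1}\bigl(\{\text{components of }u,\zeta_A(u) : \exp_A(u)\in A(\cL_{n-1})\}\bigr)}
$$
to find, for each $c\in D_n$, finitely many pairs $(u,\zeta_A(u))$ with $\exp_A(u)\in A(\cL_{n-1})$ such that $c$ is algebraic over $\cL_{n-1}$ adjoined with the components of these pairs. Taking the union over $c\in D_n$ yields a finite set $\cC_n\subset\cL_n^{2g}$ with $\exp_A(\cC_n)\subset A(\cL_{n-1})$ such that $D_n$ is algebraic over $\cL_{n-1}(\text{components of } \cC_n)$.

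Next, I would define $D_{n-1}\subset\cL_{n-1}$ as the finite set consisting of (i) the components of $\exp_A(\cC_n)$, and (ii) the finitely many elements of $\cL_{n-1}$ appearing in the minimal polynomials witnessing the above algebraicity. Applying the same peeling to $D_{n-1}$ gives $\cC_{n-1}\subset\cL_{n-1}^{2g}\subset\cL_n^{2g}$ with $\exp_A(\cC_{n-1})\subset A(\cL_{n-2})$ such that $D_{n-1}$ is algebraic over $\cL_{n-2}(\text{components of } \cC_{n-1})$. Iterating this downward produces, for $k = n, n-1, \ldots, 1$, finite sets $\cC_k\subset\cL_k^{2g}$ with $\exp_A(\cC_k)\subset A(\cL_{k-1})$ and finite sets $D_k\subset\cL_k$ (each containing the components of $\exp_A(\cC_{k+1})$) such that $D_k$ is algebraic over $\cL_{k-1}(\text{components of } \cC_k)$.

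Finally, I would set $\cC := \cC_1\cup\cdots\cup\cC_n$, a finite subset of $\cL_n^{2g}$ with $\exp_A(\cC)\subset A(\cL_{n-1})$. Telescoping the chain of algebraic relations from the bottom (where $\cL_0 = \overline{\Q(\tilde\Lambda_A)}$ is itself algebraic over $\Q(\tilde\Lambda_A)$) upward yields that each $D_k$, and in particular $D_n = C$, is algebraic over $\bQ(\tilde\Lambda_A, \text{components of }\cC)$. Since the components of $\exp_A(\cC_k)$ lie either in $\cL_0$ (for $k=1$) or in $D_{k-1}$ (for $k\ge 2$), the components of $\exp_A(\cC)$ are likewise algebraic over this field. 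The main care required is the bookkeeping: at every level one must absorb the freshly introduced components of $\exp_A(\cC_k)$ into $D_{k-1}$, so that nothing is left sitting above $\cL_0$ once the recursion terminates; once this is done, transitivity of algebraicity closes the argument.
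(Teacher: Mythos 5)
Your proof is correct and follows essentially the same downward-peeling induction as the paper's. The one difference is cosmetic: the paper uses the flattened characterization $\cL_n = \overline{\Q(\Lambda_A,\text{components of }\tilde\log_A(A(\cL_{n-1})))}$ stated just before \thmref{lin-dis}, so that at level $k$ the witnessing algebraicity is already over $\bQ(\tilde\Lambda_A,\text{components of }B_k)$ and there is no need to separately absorb minimal-polynomial coefficients into the next set $D_{k-1}$; you work from the raw recursive definition $\cL_n=\overline{\cL_{n-1}(\cdots)}$ and so must carry those coefficients along, which you do correctly.
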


\begin{proof}
Since $C \subset \cL_n$, for each $y \in C$, there exists a finite subset $D_y$ of
$A(\cL_{n-1})$ such that $y$ is algebraic over
$\bQ(\tilde\Lambda_A,\text{components of }\tilde\log_A(D_y))$.
Define $B_{n-1}:=\tilde\log_A(\cup_{y \in C}D_y)$, so that
$C$ is algebraic over $\bQ(\tilde\Lambda_A,\text{components of }B_{n-1})$.
Then $B_{n-1}\subset \tilde\log_A(A(\cL_{n-1})) \subset \cL_{n}^{2g}$.
Hence $\exp_A(B_{n-1}) \subset A(\cL_{n-1})$,
i.e. the components of $\exp_A(B_{n-1})$ is a finite subset of $\cL_{n-1}$.

We repeat this process for the components of $\exp_A(B_{n-1})$
in place of $C$ and so on, to get sets $\{B_i\}_{0 \le i \le n-2}$
such that for each $i$, $\exp_A(B_{i}) \subset A(\cL_{i})$ and components of
$\exp_A(B_{i+1})$ is algebraic over $\bQ(\tilde\Lambda_A,\text{components of }B_i)$.
We set $\cC:= \cup_{0 \le i \le n-1} B_i$ to complete the proof.
\end{proof}

\section{Proof of \thmref{lin-dis}}

In view of \lemref{l3}, we show that $\cE$ and $\cL$ are algebraically independent over $\bQ$.
It is enough to prove that $\cE_m$ and $\cL_n$ are algebraically independent
over $\bQ$ for all $m,n$.

Now suppose that there exists a pair
$(m,n) \in \N^2$ such that $\cE_m$ and $\cL_n$ are not algebraically
independent over $\bQ$. We choose such a pair $(m,n)$ with the property that
if $(a,b) < (m,n)$, then $\cE_a$ and $\cL_b$ are algebraically independent over $\bQ$.
Here the ordering `$<$' is the ordering on $\N^2$ where $(a,b) < (m,n)$ if and only if either
$a \le m$ and $b < n$, or $a < m$ and $b \le n$. Clearly $m,n \ge 1$.

As $\cE_m$ and $\cL_n$ are not algebraically independent over $\bQ$,
there exist elements $\ell_1, \ldots, \ell_h$ of $\cL_n$, algebraically independent
over  $\bQ$ which are algebraically dependent over $\cE_m$ i.e. there exists
a finite subset $\{e_1,\ldots,e_k\}$ of elements of $\cE_m$ 
such that $\ell_1,\ldots, \ell_h$ are algebraically dependent over $\bQ(e_1, \ldots, e_k)$.

Now from \lemref{l1}, we know that there exists
a finite subset $\cA$ of $\cE_{m-1}$ such that
$\cA \cup \{e_1, \ldots, e_k\}$ is algebraic over
$\bQ(\text{components of }\exp_A(\cA^g))$.
Similarly by \lemref{l2}, we have a finite set
$\cC \subset \cL_{n}^{2g}$ with $\exp_A(\cC) \subset A(\cL_{n-1})$
such that $\{\text{components of }\exp_A(\cC)\} \ \cup \ \{ \ell_1, \ldots, \ell_h \}$
is algebraic over $\bQ(\tilde\Lambda_A,\text{components of }\cC)$.
Let $|\cA^g|=n_1$ and $|\cC|=n_2$.

Define ${\bf u_1}:=(u : u \in \cA^g) \in TA^{n_1}$
by concatenating elements of $\cA^g$ one after another. Here $TA^{n_1}$
denotes the tangent space of the abelian variety $A^{n_1}$. Let $A_1$
be the smallest algebraic subgroup of $A^{n_1}$
containing the point $\exp_{A^{n_1}}({\bf  u_1})$.
Similarly define ${\bf u_2} := (u : u \in \mathcal C) \in TA^{n_2}$.
Let $A_2$ be the smallest algebraic subgroup of $A^{n_2}$
containing the point $\exp_{A^{n_2}}({\bf  u_2})$.

Let
$$
K_1:=\overline{\Q({\bf  u_1},\exp_{A^{n_1}}({\bf  u_1}),\zeta_{A^{n_1}}({\bf  u_1}))}
\ \text{and} \
K_2:=\overline{\Q(\tilde\Lambda_A,{\bf  u_2},\exp_{A^{n_2}}({\bf  u_2}),\zeta_{A^{n_2}}({\bf  u_2}))}
$$
Then $\bQ(e_1, \ldots, e_k) \subset K_1 =
\overline{\Q(\exp_{A^{n_1}}({\bf  u_1}),\zeta_{A^{n_1}}({\bf  u_1}))}$ by \lemref{l1}
and $\bQ(\ell_1,\ldots,\ell_h) \subset K_2 =
\overline{\Q(\tilde\Lambda_A,{\bf  u_2},\zeta_{A^{n_2}}({\bf  u_2}))}$
by \lemref{l2}. Hence $K_1$ and $K_2$ are not algebraically independent over $\bQ$. However,
by \conjref{aS}, we get
$$
\trdeg_{\bQ} K_1 \ge 2\dim(A_1) \ \text{and} \ \trdeg_{\bQ(\tilde\Lambda_A)} K_2 \ge 2\dim(A_2)
$$
for $i=1,2$. Since $\exp_{A^{n_1}}({\bf  u_1}) \in A_1$
and ${\bf  u_2} \in TA_2$, we get that 
$$
\trdeg_{\bQ} K_1 \le 2\dim(A_1)
$$
and
$$
\trdeg_{\bQ(\tilde\Lambda_A)} K_2 \le 2\dim(TA_2) = 2\dim(A_2).
$$
Hence
$$
\trdeg_{\bQ} K_1 = 2\dim(A_1) \ \text{and} \ \trdeg_{\bQ(\tilde\Lambda_A)} K_2 = 2\dim(A_2).
$$
We want to show that
\begin{equation}\label{to-prove}
\trdeg_{\bQ(\tilde\Lambda_A)} K_1 K_2 = 2\dim(A_1 \times A_2)
=\trdeg_{\bQ} K_1+\trdeg_{\bQ(\tilde\Lambda_A)} K_2.
\end{equation}
Adding $\trdeg_{\bQ} \bQ(\tilde\Lambda_A)$ to both sides of \eqref{to-prove}, we would get
$$
\trdeg_{\bQ} K_1 K_2 =\trdeg_{\bQ} K_1+\trdeg_{\bQ} K_2.
$$
This would prove that the fields $K_1$ and $K_2$ are algebraically
independent over $\bQ$. We will thus get a contradiction to our assumption.

Define ${\bf  u_3}:=({\bf  u_1},{\bf  u_2})$.
Let $B$ be the smallest algebraic subgroup of $A^{n_1+n_2}$
containing the point $\exp_{A^{n_1+n_2}}({\bf  u_3})$.
Then by \conjref{aS}, we get
$$
\trdeg_{\bQ(\tilde\Lambda_A)} K_1 K_2 \ge 2\dim(B).
$$
Thus we are reduced to prove that $\dim(B)=\dim(A_1) + \dim(A_2)$.
If they are torsion subgroups then we have nothing to prove.
So we assume that at least one of $A_1$ and $A_2$ is not a torsion subgroup.

We first assume $A$ to be simple. For ${\bf  u_i} \in TA_i \hookrightarrow TA^{n_i}$,
we choose a basis and write ${\bf  u_i}=(u_{i1},\ldots,u_{in_i})$ with $u_{ij} \in TA$
for $i=1,2$ and $j=1,\ldots,n_i$. We consider any of the defining relation
for $TB$,
\begin{equation}\label{def-TB}
\sum_{1\le j \le n_1} \delta_{1j} x_{1j} - \sum_{1 \le j \le n_2} \delta_{2j} x_{2j}=0,
\end{equation}
where $\delta_{ij} \in End(A)$ for $i=1,2$ and $j=1,\ldots,n_i$. Let $u:=
\sum_{1\le j \le n_1} \delta_{1j} u_{1j} = \sum_{1 \le j \le n_2} \delta_{2j} u_{2j}$.
Then $u \in \cE_{m-1}^g \cap \cL_{n}^g$ as each $u_{1j} \in \cA^g$
and $u_{2j} \in \cC$. Thus $u \in \bQ^g$, by the choice of $m,n$.

On the other hand $\exp_A(u)=\sum_{1\le j \le n_1} \delta_{1j} \exp_A(u_{1j})
= \sum_{1 \le j \le n_2} \delta_{2j} \exp_A(u_{2j})$. For similar reason
$\exp_A(u) \in A(\cE_m) \cap A(\cL_{n-1})$. Again $\cE_m \cap \cL_{n-1} =\bQ$,
and hence $\exp_A(u) \in A(\bQ)$. Thus,
$$
\trdeg_{\bQ} \bQ(u, \exp_A(u))=0
$$
Now if $H$ is the smallest algebraic subgroup of $A$ containing $\exp_A(u)$,
then $\dim(H)=0$, by \conjref{aS}, i.e. $H$ is torsion subgroup.
In particular, $n u \in \bQ^g$ and $\exp_A(nu)=0$, for a suitable integer $n$.

Thus, $\exp_A(nu)=\sum_{1\le j \le n_1} \delta_{1j} \exp_A(nu_{1j})
= \sum_{1 \le j \le n_2} \delta_{2j} \exp_A(nu_{2j})=0$. Now
$A_i$ is the smallest algebraic subgroup of $A^{n_i}$
containing the point $\exp_{A^{n_i}}({\bf  u_i})$ for $i=1,2$.
Thus, $\sum_{1\le j \le n_i} \delta_{ij} \exp_A(nx_{ij})=0$
for any point $\exp_{A^{n_i}}({\bf  x_i}) \in A_i$ for $i=1,2$.
Since at least one of $A_1$ or $A_2$ is not a torsion subgroup,
there exists $i\in\{1,2\}$, such that $\sum_{1\le j \le n_i} \delta_{ij} n x_{ij}=0$ on $TA_i$.
Hence $\sum_{1\le j \le n_i} \delta_{ij} x_{ij}=0$ on $TA_i$, and therefore
defining relations for $TB$ separate into disjoint relations defining $TA_1$
and $TA_2$. Thus we have $\dim(B)=\dim(A_1) + \dim(A_2)$.

Now we treat the case when $A$ is not a simple abelian variety. In this case,
we would like to write down the generic form of a defining relation for $TB$
and we show that it is a collection of relations of the form \eqref{def-TB}.
Then the proof will follow as above.

We suppose that $A^{n_1+n_2}$ is isogenous to $V_1^{r_1} \times \cdots \times V_l^{r_l}$,
where for $1 \le i \neq j \le l$, $V_i$ is an abelian variety not isogenous to $V_j$. Thus,
the tangent space $TA^{n_1+n_2}$ has the form
$$
\underbrace{TV_1\oplus \cdots \oplus TV_1}_{r_1 \text{ times}}
\oplus \cdots \oplus \underbrace{TV_l\oplus \cdots \oplus TV_l}_{r_l \text{ times}}.
$$
Now $B \subset A_1 \times A_2 \subset A^{n_1+n_2}$. Hence,
$A^{n_1+n_2}/B$ can be written in the form $V_1^{s_1} \times \cdots \times V_l^{s_l}$,
where for each $1 \le i \le l$, $s_i \le r_i$. Now $B$ is the kernel
of the natural map from $A^{n_1+n_2} \to A^{n_1+n_2}/B$.
So for this we find the corresponding map
$V_1^{r_1} \times \cdots \times V_l^{r_l} \to V_1^{s_1} \times \cdots \times V_l^{s_l}$,
for which $B$ is isogenous to the kernel.

Such a map is expressed as a block diagonal matrix of order
$(s_1+\cdots+s_l,r_1+\cdots+r_l)$. This matrix has diagonal blocks of order
$(s_i,r_i)$ with entries from $End(V_i)$, for each $1 \le i \le l$.
Now such a matrix acts on an element $(x_{11}, \ldots, x_{1r_1}, \ldots , x_{l1}, \ldots, x_{l r_l})$
of $TA^{n_1+n_2}$, written as a column. Under this action an element of $TB$ is mapped
to the zero vector i.e.
\begin{equation}\label{def-TB-2}
\left( \begin{array}{c c c c}
(\delta_{1jk})_{s_1 \times r_1} & (0)_{s_1 \times r_2} & \cdots & (0)_{s_1 \times r_l}\\
(0)_{s_2 \times r_1} & (\delta_{2jk})_{s_2 \times r_2} & \cdots & (0)_{s_2 \times r_l}\\
\vdots & \vdots & \ddots & \vdots\\
(0)_{s_l \times r_1} & (0)_{s_l \times r_2} & \cdots & (\delta_{ljk})_{s_l \times r_l}
\end{array} \right)
\left( \begin{array}{c} {{\bf x}_{1}} \\ {{\bf x}_{2}}\\ \vdots \\ {{\bf x}_{l}}
\end{array} \right)
 = {\bf 0},
\end{equation}
where for $1 \le i \le l$, ${{\bf x}_{i}}=\left( \begin{array}{c} {x_{i1}} \\ \vdots \\ x_{i r_i}
\end{array} \right)$.
Since the matrix is block diagonal, we get that
a defining relation for $TB$ is given as the relations of the form
$$
\sum_{k=1}^{r_i} \delta_{ijk} x_{ik} = 0
$$
for some $\delta_{ijk} \in End(V_i)$ with $1 \le i \le l, 1\le j \le s_i$ and $1 \le k \le r_i$.
This completes the proof.

\subsection{A special case}

Let us consider the following two sub-fields of $\cE$ and $\cL$ :
$$
\cE' =\bigcup_{n \ge 0} \cE_n' \ \ \text{and} \ \ \cL' =\bigcup_{n \ge 0} \cL_n'
$$
where $\cE_0'=\bQ$, $\cL_0' = \bQ(\Lambda_A)$ and for $n \ge 1$,
$$
\cE_n' = \overline{\cE_{n-1}'(\{\text{components of }\exp_A(u) : u \in \cE_{n-1}^{'g}\})}
$$
and
$$
\cL_n' = \overline{\cL_{n-1}'(\{\text{components of }u : \exp_A(u) \in A(\cL_{n-1}')\})}.
$$
In fact, by induction, one can see that for $n \ge 1$,
$$
\cE_n' = \overline{\Q(\text{components of } \exp_A(\cE_{n-1}^{'g}))}
\ \ \text{and} \ \
\cL_n' = \overline{\Q(\Lambda_A,\text{components of } \exp_A^{-1}(A(\cL_{n-1}')))}.
$$
Arguments as in our proof  of \thmref{lin-dis} immediately yields the following : 

\begin{thm}\label{lin-dis-1}
If \eqref{qp-0} is true, then $\cE'$ and $\cL'$ are linearly disjoint over $\bQ$.
\end{thm}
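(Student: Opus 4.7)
The plan is to imitate the proof of \thmref{lin-dis} line by line, with two simplifications: quasi-periods are dropped throughout ($\tilde\log_A$ is replaced by $\log_A$, $\tilde\Lambda_A$ by $\Lambda_A$, and $\zeta_A$-data is suppressed), and the weaker hypothesis \eqref{qp-0} is invoked in place of \conjref{aS}. By \lemref{l3}, it suffices to prove that $\cE_m'$ and $\cL_n'$ are algebraically independent over $\bQ$ for every pair $(m,n)\in\N^2$. Arguing by contradiction, fix a minimal such pair with $m,n\ge 1$ (for the partial order used in the proof of \thmref{lin-dis}) and pick $\ell\in\cL_n'\setminus\bQ$ together with non-zero $e_1,\ldots,e_k\in\cE_m'$ such that $\ell$ is algebraic over $\bQ(e_1,\ldots,e_k)$.

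Next one states and proves the primed analogues of \lemref{l1} and \lemref{l2}: every finite subset of $\cE_n'$ can be packaged, via a finite $\cA\subset\cE_{n-1}'$, so as to be algebraic over $\bQ(\text{components of }\exp_A(\cA^g))$; and every finite subset of $\cL_n'$ can be packaged, via a finite $\cC\subset\cL_n^{'g}$ with $\exp_A(\cC)\subset A(\cL_{n-1}')$, so as to be algebraic over $\bQ(\Lambda_A,\text{components of }\cC)$. The proofs of \lemref{l1} and \lemref{l2} transfer without change. Applied to $\cA\cup\{e_1,\ldots,e_k\}$ and $\{\ell\}$ they produce tangent vectors ${\bf u_1}\in\cA^g$, ${\bf u_2}\in\cC$, and associated smallest algebraic subgroups $A_i\subset A^{n_i}$ containing $\exp_{A^{n_i}}({\bf u_i})$. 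Setting
$$K_1':=\overline{\Q({\bf u_1},\exp_{A^{n_1}}({\bf u_1}))},\qquad K_2':=\overline{\Q(\Lambda_A,{\bf u_2},\exp_{A^{n_2}}({\bf u_2}))},$$
the hypothesis \eqref{qp-0} applied to $A^{n_1}$ and $A^{n_2}$, combined with the reverse inequalities coming from ${\bf u_i}\in TA_i$, yields $\trdeg_{\bQ}K_1'=\dim A_1$ and $\trdeg_{\bQ(\Lambda_A)}K_2'=\dim A_2$. As in the proof of \thmref{lin-dis}, the contradiction follows once one establishes $\trdeg_{\bQ(\Lambda_A)}K_1'K_2'=\dim A_1+\dim A_2$, which in turn reduces, via \eqref{qp-0} applied to ${\bf u_3}=({\bf u_1},{\bf u_2})$, to the group-theoretic statement $\dim B=\dim A_1+\dim A_2$, where $B$ is the smallest algebraic subgroup of $A^{n_1+n_2}$ containing $\exp({\bf u_3})$.

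The splitting $\dim B=\dim A_1+\dim A_2$ is derived verbatim from the proof of \thmref{lin-dis}: one writes a generic defining relation $\sum_j\delta_{1j}x_{1j}=\sum_j\delta_{2j}x_{2j}$ for $TB$ (in the simple case, or in block-diagonal form in general), defines $u$ to be the common value at $({\bf u_1},{\bf u_2})$, observes that $u\in\cE_{m-1}^{'g}\cap\cL_n^{'g}$ while $\exp_A(u)\in A(\cE_m')\cap A(\cL_{n-1}')$, and invokes the minimality of $(m,n)$. Applying \eqref{qp-0} to $u$ then forces the smallest algebraic subgroup of $A$ containing $\exp_A(u)$ to be zero-dimensional, hence torsion, after which the torsion-clearing argument splits the relation. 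The only point requiring care---but not a genuine obstacle---is that $\cL_0'=\bQ(\Lambda_A)$ is not algebraically closed, so the conclusion ``$\exp_A(u)\in A(\bQ)$'' from the proof of \thmref{lin-dis} must be relaxed to ``$\exp_A(u)\in A(\bQ(\Lambda_A))$''; this weaker conclusion is exactly what \eqref{qp-0} requires, and the argument closes.
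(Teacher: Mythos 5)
Your proposal is correct and carries out exactly what the paper intends when it says ``Arguments as in our proof of \thmref{lin-dis} immediately yields the following'': suppress the $\zeta_A$-data (replacing $\tilde\log_A$ by $\log_A$, $\tilde\Lambda_A$ by $\Lambda_A$, and halving the dimension counts), apply \eqref{qp-0} to the points ${\bf u_1},{\bf u_2},{\bf u_3}$ in the powers $A^{n_1},A^{n_2},A^{n_1+n_2}$, and run the same induction on $(m,n)$ with the same splitting of the defining relations of $TB$. The one point you flag as ``requiring care''---relaxing $\exp_A(u)\in A(\bQ)$ to $\exp_A(u)\in A(\bQ(\Lambda_A))$ because $\cL_0'=\bQ(\Lambda_A)$ is not algebraically closed---is in fact unnecessary: algebraic independence of $\cE_m'$ and $\cL_0'$ over $\bQ$, supplied by the minimality of $(m,n)$, already forces $\cE_m'\cap\cL_0'=\bQ$ and hence $\exp_A(u)\in A(\bQ)$; but the weaker conclusion you use is of course also adequate, since either way $\bQ(\Lambda_A,u,\exp_A(u))=\bQ(\Lambda_A)$ and \eqref{qp-0} gives $\dim H=0$.
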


\section{An application}
%
%
%

Now we consider the two recursively defined sets $\cE,\cL$
related to the Weierstrass $\wp$-function associated to a lattice
$\Lambda$ with algebraic invariants $g_2,g_3$, defined as follows
$$
\cE =\bigcup_{n \ge 0} \cE_n \ \ \text{and} \ \ \cL =\bigcup_{n \ge 0} \cL_n
$$
where $\cE_0=\bQ, \cL_0 = \overline{\Q(\Lambda)}$  and for $n \ge 1$,
$$
\cE_n = \overline{\cE_{n-1}(\{\wp(x) : x \in \cP_{n-1} \setminus \Lambda\})}
\ \ \text{and} \ \
\cL_n = \overline{\cL_{n-1}(\{x : \wp(x) \in \cL_{n-1} \cup \{\infty\}\})}.
$$
In fact, by induction, one can see that for $n \ge 1$,
$$
\cE_n = \overline{\Q(\wp(\cE_{n-1} \setminus \Lambda))}
\ \ \text{and} \ \
\cL_n = \overline{\Q(\wp^{-1}(\cL_{n-1} \cup \{\infty\}))}.
$$
As a corollary to \thmref{lin-dis}, we obtain the following result.

\begin{cor}\label{lin-dis-2}
If \conjref{eS} is true, then $\cE$ and $\cL$
are linearly disjoint over $\bQ$.
\end{cor}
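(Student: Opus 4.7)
The plan is to apply \thmref{lin-dis} to the elliptic curve $E$ with invariants $g_2, g_3 \in \bQ$, viewed as an abelian variety of dimension one over $\bQ$. Under the Weierstrass parametrisation, the components of $\exp_E(u)$ are $\wp(u)$ and $\wp'(u)$, and the relation $\wp'(u)^2 = 4\wp(u)^3 - g_2 \wp(u) - g_3$ (with $g_2, g_3 \in \bQ$) forces $\wp'(u) \in \overline{\bQ(\wp(u))}$; hence adjoining $\wp'(u)$ does not change the algebraic closure. An induction on $n$ identifies the corollary's $\cE_n$ with the $\cE_n$ of \thmref{lin-dis} for $A = E$. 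The corollary's $\cL$ omits the quasi-periods $\eta_1, \eta_2$ from $\cL_0$ and the values $\zeta(u)$ from the recursion, so it is a subfield of the $\cL$ of \thmref{lin-dis}. Since linear disjointness over $\bQ$ passes to subfields, it suffices to verify the hypothesis of \thmref{lin-dis}: \conjref{aS} for every power $E^N$.

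To derive \conjref{aS} for $E^N$ from \conjref{eS}, fix $\mathbf{u} = (u_1, \ldots, u_N) \in T(E^N) = \C^N$, let $H \subseteq E^N$ be the smallest algebraic subgroup containing $\exp_{E^N}(\mathbf{u})$, and set $K = End(E) \otimes \Q$. Up to isogeny, the sub-abelian varieties of $E^N$ correspond to $K$-subspaces of $K^N$, and the equivalence $\sum_i \alpha_i \exp_E(u_i) = 0 \iff \sum_i \alpha_i u_i \in \Lambda$ (for $\alpha_i \in K$) yields $\dim H = n$, the $K$-rank of $\{u_1, \ldots, u_N\}$ modulo $\Lambda$. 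Choose a $K$-linearly independent subset $\{x_1, \ldots, x_n\} \subseteq \{u_1, \ldots, u_N\}$ modulo $\Lambda$. Each $u_i$ is a $K$-linear combination of the $x_j$'s modulo $\Lambda$, so by the addition formulas for $\wp, \wp'$ and the quasi-periodicity $\zeta(z + \omega_i) = \zeta(z) + \eta_i$, the field
\begin{equation*}
\bQ\bigl(\omega_1, \omega_2, \eta_1, \eta_2, u_i, \wp(u_i), \zeta(u_i) : 1 \le i \le N\bigr)
\end{equation*}
is algebraic over the analogous field built from $x_1, \ldots, x_n$. Applying \conjref{eS} to $x_1, \ldots, x_n$ and using the elementary bound $\trdeg_\Q \bQ(\omega_1, \omega_2, \eta_1, \eta_2) \le 4/[K:\Q]$ (trivial in the non-CM case; in the CM case using $\omega_2 \in K \omega_1$ together with Legendre's relation $\omega_1 \eta_2 - \omega_2 \eta_1 = 2\pi i$), additivity of transcendence degree yields
\begin{equation*}
\trdeg_{\bQ(\omega_1, \omega_2, \eta_1, \eta_2)} \bQ\bigl(\omega_1, \omega_2, \eta_1, \eta_2, u_i, \wp(u_i), \zeta(u_i) : 1 \le i \le N\bigr) \ge 2n = 2 \dim H,
\end{equation*}
which is \conjref{aS} for $E^N$.

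With \conjref{aS} established for every power of $E$, \thmref{lin-dis} applied to $A = E$ delivers the desired linear disjointness. The main obstacle is the second paragraph: rigorously computing $\dim H$ through the $K$-linear algebra of sub-abelian varieties of $E^N$, establishing the algebraic expressibility of $\wp(u_i)$ and $\zeta(u_i)$ in terms of the chosen basis via the addition formulas and the lattice behaviour of $\zeta$, and transferring the $\trdeg_\Q$ bound of \conjref{eS} to the relative bound over $\bQ(\omega_1, \omega_2, \eta_1, \eta_2)$ via the upper bound on the transcendence degree of the periods and quasi-periods.
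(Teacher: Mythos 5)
Your reduction to \thmref{lin-dis} is essentially the argument the paper intends but leaves implicit: the corollary's $\cE_n$ agrees with the $\cE_n$ of \thmref{lin-dis} for $A=E$ because $g_2,g_3\in\bQ$ forces $\wp'(u)\in\overline{\bQ(\wp(u))}$; the corollary's $\cL$ is a subfield of the $\cL$ of \thmref{lin-dis} (the quasi-periods and $\zeta$-values are simply omitted); and linear disjointness over $\bQ$ clearly descends to subfields. The remaining work is to show \conjref{eS} implies \conjref{aS} for every power $E^N$, which is the content the paper does not spell out, and the structure of your derivation (compute $\dim H$ via the $K$-linear rank of the $u_i$ modulo torsion, reduce to a $K$-independent subsystem via addition formulas for $\wp,\wp',\zeta$, apply \conjref{eS} and peel off the transcendence degree of the period field) is the right one.

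There is, however, one genuine gap in the CM case of your transcendence-degree bookkeeping. You claim $\trdeg_\Q\bQ(\omega_1,\omega_2,\eta_1,\eta_2)\le 4/[K:\Q]=2$ follows from $\omega_2\in K\omega_1$ together with Legendre's relation $\omega_1\eta_2-\omega_2\eta_1=2\pi i$. The first relation gives $\trdeg\le 3$. Legendre's relation does not lower this further: it expresses $2\pi i$ in terms of the four periods and quasi-periods, i.e.\ it shows $2\pi i\in\Q(\omega_1,\omega_2,\eta_1,\eta_2)$, but it does not make any of $\omega_1,\eta_1,\eta_2$ algebraic over the others. To reach $\trdeg\le 2$ you need the additional CM relation due to Masser, which makes $\eta_1,\eta_2$ algebraically dependent over $\overline{\Q(g_2,g_3,\omega_1,\omega_2)}$ --- exactly the relation the paper itself invokes in its \S4.1 example --- or, alternatively, Deligne's unconditional bound $\trdeg_\Q\bQ(\tilde\Lambda_E)\le\dim MT(E)=2$ for CM curves. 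With that citation corrected, the argument goes through as you outline it.
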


In fact, in view of \thmref{lin-dis-1}, the conclusion of the above corollary also holds
if \eqref{weak-elliptic} is true. We know that when $g_2,g_3$ are algebraic,
$\omega \in \Lambda\setminus\{0\}$ is transcendental. Hence as a corollary 
to \corref{lin-dis-2} we obtain that $\omega \notin \cE$.
In particular, $\omega$ can not be a value of the $\wp$ function iterated
at an algebraic point.

\subsection{An example}

We now exhibit an example in the elliptic case which shows that
if the curve is not defined over $\bQ$, then the conclusion of our
\thmref{lin-dis} (or, \corref{lin-dis-2}) does not follow from the same
hypothesis.

For a fixed real algebraic irrational number $\alpha$, our aim is to find out a
lattice $\Z + \Z \tau$ such that $\wp(\tau;\alpha)=\wp(\Z+\Z\tau;\alpha)$
can be written as a polynomial in $\tau$ with algebraic coefficients.

We first claim that we can find $\tau_0$ such that
$\frac{d}{d \tau} \wp(\tau;\alpha) |_{\tau=\tau_0} \neq \frac{\wp(\tau_0;\alpha)}{\tau_0}$.
If not, then we have
$\frac{d \wp(\tau;\alpha)}{\wp(\tau;\alpha)}|_{\tau=\tau_0} = \frac{d \tau}{\tau}|_{\tau=\tau_0}$
for all $\tau_0$ in the complex upper half plane. Thus $d (\log(\wp(\tau;\alpha)))|_{\tau=\tau_0}
=d (\log \tau)|_{\tau=\tau_0}$ i.e. $\wp(\tau_0;\alpha)=c_\alpha \tau_0$ for all $\tau_0$,
where $c_\alpha$ is a constant depending on $\alpha$. But this is not possible, as can be
checked from the $q$-expansion of $\wp$.

So we choose $\tau_0$ such that
$\frac{d}{d \tau} \wp(\tau;\alpha) |_{\tau=\tau_0} \neq \frac{\wp(\tau_0;\alpha)}{\tau_0}$
and denote the ratio $\frac{\wp(\tau_0;\alpha)}{\tau_0}$ by $\lambda_0$. We now choose
$\lambda$ close to $\lambda_0$ such that $\lambda \in \bQ$ and
$\frac{d}{d \tau} \wp(\tau;\alpha) |_{\tau=\tau_0} \neq \lambda$.
Consider the function $f(\tau)=\wp(\tau;\alpha)- \lambda \tau$. Then we get that
$f'(\tau)|_{\tau=\tau_0} \neq 0$. Hence $f$ has a local inverse at $\tau_0$,
say $g$, which is defined in a neighbourhood of $f(\tau_0)$.
Choose $\beta \in \bQ$ sufficiently close to $f(\tau_0)$ and set $\tau_1=g(\beta)$.
Then $\beta=f(\tau_1)=\wp(\tau_1;\alpha)- \lambda \tau_1$. Thus $\wp(\tau_1;\alpha)$
can be written as a polynomial in $\tau_1$ with algebraic coefficients.

The elliptic Schanuel conjecture implies that $\tau_1$ is transcendental.
Indeed, if $\tau_1$ is a quadratic irrational, then the associated $j$ invariant 
is algebraic. Hence $g_2,g_3$ are algebraically related and 
$\eta_1,\eta_2$, satisfying Masser's relation, are algebraically dependent
over $\overline{\Q(g_2,g_3)}$. Now from \conjref{eS} we get a contradiction by
taking $n=1$ and $x_1=\alpha$. If $\tau_1$ is algebraic of degree
larger than 2, then \conjref{eS} gives a contradiction again for $n=1$ and
$x_1=\alpha$.

Now for this choice of $\tau_1$, we see that $\tau_1$ belongs to both $\cE_1$ and $\cL_1$,
where the tower of fields $\cE_i$'s and $\cL_i$'s are constructed as in the beginning of
this section, but with $\bQ$ replaced by the corresponding field of definition $\overline{\Q(g_2,g_3)}$.
However, we show below that $\tau_1$ is transcendental over $\Q(g_2,g_3)$.
This gives $\trdeg_{\cE_0} \cE_1 \cap \cL_1 \ge 1$, which implies
$\cE_1 \cap \cL_1 \neq \cE_0$ and therefore $\cE_1$ and $\cL_1$
are not linearly disjoint over $\cE_0$.

To prove that $\tau_1$ is transcendental over $\Q(g_2,g_3)$,
note that $1,\tau_1$ and $\alpha$ are $\Q$ linearly independent. Then the
elliptic Schanuel conjecture yields
$\trdeg_\Q \Q(g_2,g_3,1,\tau_1,\alpha,\wp(\tau_1;\alpha)) \ge 3$.
Now from our construction we see that 
$\trdeg_\Q \Q(g_2,g_3,1,\tau_1,\alpha,\wp(\tau_1;\alpha))
=\trdeg_\Q \Q(g_2,g_3,\tau_1)$. Thus $\tau_1$ is transcendental
over $\Q(g_2,g_3)$.

\medskip

\noindent {\bf Acknowledgement:}
The authors would like to thank D. Bertrand for helpful
discussions and useful comments on an earlier version of this article.
The second and the third author would like to thank the Institut de
Math\'ematiques de Jussieu for hospitality during academic visits in the
frame of the IRSES Moduli and LIA. Research of the second author is also
supported by SERB-DST-NPDF grant vide PDF/2016/002938. The authors
would also like to thank the referee for helpful suggestions and Luca Ghidelli
for pointing out an inaccurate assumption in the published version.

\end{document}